\newtheorem{thm}{Theorem}
\newtheorem{lem}[thm]{Lemma}
\newtheorem{ex}[thm]{Example}
\def\clap#1{\hbox to0pt{\hss#1\hss}}
\def\Stepset#1#2#3#4#5#6#7#8#9{%
  \leavevmode\kern-2pt
  \raisebox{-9pt}{%
  \setlength{\unitlength}{.9pt}%
  \begin{picture}(20,20)(-10,-10)
    \put(-5,-5){\clap{$\scriptstyle\ifx1#1\bullet\else\cdot\fi$}}
    \put(0,-5){\clap{$\scriptstyle\ifx1#2\bullet\else\cdot\fi$}}
    \put(5,-5){\clap{$\scriptstyle\ifx1#3\bullet\else\cdot\fi$}}
    \put(-5,0){\clap{$\scriptstyle\ifx1#4\bullet\else\cdot\fi$}}
    \put(0,0){\clap{$\scriptstyle\ifx1#5\bullet\else\cdot\fi$}}
    \put(5,0){\clap{$\scriptstyle\ifx1#6\bullet\else\cdot\fi$}}
    \put(-5,5){\clap{$\scriptstyle\ifx1#7\bullet\else\cdot\fi$}}
    \put(0,5){\clap{$\scriptstyle\ifx1#8\bullet\else\cdot\fi$}}
    \put(5,5){\clap{$\scriptstyle\ifx1#9\bullet\else\cdot\fi$}}
  \end{picture}}\StepsetB}
\def\StepsetB#1#2#3#4#5#6#7#8{%
  \raisebox{-8pt}{%
  \setlength{\unitlength}{.9pt}%
  \begin{picture}(20,20)(-10,-10)
    \put(-5,-5){\clap{$\scriptstyle\ifx1#1\bullet\else\cdot\fi$}}
    \put(0,-5){\clap{$\scriptstyle\ifx1#2\bullet\else\cdot\fi$}}
    \put(5,-5){\clap{$\scriptstyle\ifx1#3\bullet\else\cdot\fi$}}
    \put(-5,0){\clap{$\scriptstyle\ifx1#4\bullet\else\cdot\fi$}}
    \put(5,0){\clap{$\scriptstyle\ifx1#5\bullet\else\cdot\fi$}}
    \put(-5,5){\clap{$\scriptstyle\ifx1#6\bullet\else\cdot\fi$}}
    \put(0,5){\clap{$\scriptstyle\ifx1#7\bullet\else\cdot\fi$}}
    \put(5,5){\clap{$\scriptstyle\ifx1#8\bullet\else\cdot\fi$}}
  \end{picture}}\StepsetC}
\def\StepsetC#1#2#3#4#5#6#7#8#9{%
  \raisebox{-7pt}{%
  \setlength{\unitlength}{.9pt}%
  \begin{picture}(20,20)(-10,-10)
    \put(-5,-5){\clap{$\scriptstyle\ifx1#1\bullet\else\cdot\fi$}}
    \put(0,-5){\clap{$\scriptstyle\ifx1#2\bullet\else\cdot\fi$}}
    \put(5,-5){\clap{$\scriptstyle\ifx1#3\bullet\else\cdot\fi$}}
    \put(-5,0){\clap{$\scriptstyle\ifx1#4\bullet\else\cdot\fi$}}
    \put(0,0){\clap{$\scriptstyle\ifx1#5\bullet\else\cdot\fi$}}
    \put(5,0){\clap{$\scriptstyle\ifx1#6\bullet\else\cdot\fi$}}
    \put(-5,5){\clap{$\scriptstyle\ifx1#7\bullet\else\cdot\fi$}}
    \put(0,5){\clap{$\scriptstyle\ifx1#8\bullet\else\cdot\fi$}}
    \put(5,5){\clap{$\scriptstyle\ifx1#9\bullet\else\cdot\fi$}}
  \end{picture}}\kern-2pt}
\def\Z{\mathbb Z}
\def\S{\mathcal{S}}
\let\set\mathbb
\begin{document}

 \author[Manuel Kauers]{Manuel Kauers\,$^\ast$}
 \author[Rong-Hua Wang]{Rong-Hua Wang\,$^\ast$}
 \address{Institute for Algebra, J. Kepler University Linz, Austria}
 \email{mkauers@algebra.uni-linz.ac.at}
 \email{ronghua.wang@jku.at}
 \thanks{$^\ast$ Supported by the Austrian FWF grant Y464-N18.}

 \title{Lattice Walks in the Octant with Infinite Associated Groups}

 \begin{abstract}
Continuing earlier investigations of restricted lattice walks in~$\set N^3$, we take a closer look at the models with infinite
associated groups. We find that up to isomorphism, only 12 different infinite groups appear, and we establish a connection
between the group of a model and the model being Hadamard.
 \end{abstract}

 \maketitle

\section{Introduction}

Since the classification project for nearest neighbor lattice walk models in the quarter plane, initiated by Bousquet-Melou and
Mishna~\cite{MelouMishna2010}, is largely completed, the analogous question for 3D models in the octand is getting into 
the focus~\cite{BostanBousquetKauersMelczer2015,bacher16,DuHouWang2016}.
Given a stepset $\mathcal{S}\subseteq\{-1,0,1\}^3\setminus\{(0,0,0)\}$, let $f(x,y,z,t)=\sum_{n,i,j,k}a_{i,j,k,n}x^iy^jz^kt^n$
be the generating function which counts the number $a_{i,j,k,n}$ of walks in $\set N^3$ from $(0,0,0)$ to $(i,j,k)$ consisting of $n$ steps
taken from~$\S$. The main question is then: for which choices~$\mathcal{S}$ is the series $f$ D-finite?

For models in 2D, it turns out that the generating function is D-finite if and only if a certain group associated to 
the model is finite, see for example~\cite{fayolle99,MishnaRechnitzer2009,BostanKauers2010,kurkova12,raschel12,BostanRaschelSalvy2014,MelczerMishna2014,courtiel16} and the references given there.
The situation in 3D seems to be more complicated, as evidenced by some models having a finite group that seem to be non-D-finite~\cite{BostanBousquetKauersMelczer2015,bacher16}.
Among the $2^{3^3-1}$ models, there are (up to bijection) 10,908,263 models which have a group associated to them. 
For 10,905,833 of these models, their group has more than 400 elements. It was shown in~\cite{DuHouWang2016} for all the models with at most six steps that
these groups are in fact infinite. Our first result extends this result to the remaining models.

\begin{thm}\label{thm:infinite}
  For all 3D models with a group with more than 400 elements, the group is in fact infinite. 
\end{thm}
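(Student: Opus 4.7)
The plan is computational, extending the strategy of \cite{DuHouWang2016} from models with at most six steps to all remaining cases. The task splits into two parts: identifying the models whose breadth-first enumeration of the associated group $G_\mathcal{S}$ fails to close within $400$ elements, and producing for each such model a certificate that the group is actually infinite.

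For the first part, I would work up to the natural $S_3$-symmetry of the octant (axis permutations preserving $\mathbb{N}^3$), and for each representative $\mathcal{S}$ run a breadth-first search in $G_\mathcal{S}$ starting from the identity and applying the three standard birational involutions $\phi_x,\phi_y,\phi_z$. These involutions act on $\mathbb{C}(x,y,z)$, preserve the step polynomial $\chi_\mathcal{S}(x,y,z) = \sum_{(a,b,c)\in\mathcal{S}} x^a y^b z^c$, and act nontrivially only on one variable each. If the BFS produces more than $400$ elements without closing, the model is flagged for a separate infiniteness certificate.

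For the certificate, I would use linearization at a fixed point. Choose a short word $w$ in $\phi_x,\phi_y,\phi_z$, form $g = w(\phi_x,\phi_y,\phi_z)$ as a birational self-map of $\mathbb{A}^3$, find a fixed point $p$ (often located on a coordinate axis or on a diagonal forced by symmetries of $\mathcal{S}$), and compute the Jacobian $Dg(p)$. If the characteristic polynomial of $Dg(p)$ has an irreducible factor that is not cyclotomic --- an exact check over $\mathbb{Q}$ --- then $g$ has infinite order, and hence so does $G_\mathcal{S}$. As a fallback for models without a suitable rational fixed point, one can instead verify exponential growth of the degrees $\deg(g^n)$ for small $n$, which again rules out finite order.

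The main obstacle is uniformity across roughly $10^7$ candidate models. The certification must succeed for every representative, which realistically requires an automated search through a small pool of candidate words $w$ and candidate fixed points for each model, together with a careful handling of degenerate situations (such as $Dg(p)$ being nilpotent, or $p$ lying on the indeterminacy locus of $g$). Once this case analysis is complete for every flagged representative, the theorem follows.
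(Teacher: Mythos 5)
Your main strategy coincides with the paper's: the authors also reduce to representatives, detect the models whose group does not close within 400 elements, and then certify infiniteness by the \emph{fixed point method}, i.e.\ exactly your linearization argument (find a fixed point of a short word $g$ in $\phi_x,\phi_y,\phi_z$ and check that an eigenvalue of the Jacobian $Dg(p)$ is not a root of unity). This handles 10,905,634 of the models. Where you diverge is the fallback for the models on which no usable fixed point exists: the paper resorts to the \emph{valuation method} of Du--Hou--Wang for the remaining 199 models, in which $x,y,z$ are specialized to Laurent series in $t$ of prescribed valuations $(u,v,w)$ and one shows that the induced affine action of the group on $(u,v,w)$ has an element of infinite order (the same technique the paper later illustrates in its Section on $G_4$). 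Your proposed fallback --- observing that $\deg(g^n)$ grows for small $n$ --- is not a valid certificate as stated: finitely many degree computations cannot exclude that the degrees eventually collapse, and indeed degree drops under composition of birational maps are exactly the degenerate phenomenon one must worry about here. So you should either replace that fallback by the valuation argument (or some other provably conclusive criterion), or show that the fixed-point certificate succeeds for every flagged model, which the paper's count of 199 failures indicates it does not.
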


Because of space limitations, and since the proof techniques are exactly the same as in~\cite{MelouMishna2010,DuHouWang2016}, we do not give any further details.
We just mention that we used the fixed point method for 10,905,634 models and the valuation method for the 199 models on
which the fixed point method failed.

In this short paper, we have a closer look at these infinite groups.

\section{Infinite Groups Associated to 3D Models}

Recall the definition of the groups~\cite{MelouMishna2010,BostanBousquetKauersMelczer2015}. Given
$S\subseteq\{-1,0,1\}^3\setminus\{(0,0,0)\}$, let $P_{\mathcal{S}}(x,y,z)=\sum_{(i,j,k)\in \mathcal{S}}x^iy^jz^k$. Collecting coefficients
of $x,y,z$, respectively, we can write
\begin{align*}
P_{\S}(x,y,z)& =x^{-1}A_{-}(y,z)+A_{0}(y,z)+xA_{+}(y,z)\\
             & =y^{-1}B_{-}(x,z)+B_{0}(x,z)+yB_{+}(x,z)\\
             & =z^{-1}C_{-}(x,y)+C_{0}(x,y)+zC_{+}(x,y),
\end{align*}
for certain bivariate Laurent polynomials $A_-,A_0,A_+,B_-,B_0,B_+,C_-,C_0,C_+$. Then the group of $\S$, denoted by $G(\S)$, is generated
by the maps
\[
\phi_x(x,y,z)=\left(\frac{1}{x}\frac{A_{-}}{A_{+}},y,z\right)
,\quad
\phi_y(x,y,z)=\left(x,\frac{1}{y}\frac{B_{-}}{B_{+}},z\right)
,\quad
\phi_z(x,y,z)=\left(x,y,\frac{1}{z}\frac{C_{-}}{C_{+}}\right)
\]
under composition. If one of $A_-,A_+,B_-,B_+,C_-,C_+$ is zero, the group is undefined. The stepsets for which this happens are in bijection
with lower dimensional models, and are excluded from consideration for the rest of this paper. 

\begin{ex}
Let $\S_1=\{(-1, -1, -1), (-1, 1, 1), (1, 0, 1), (1, 1, 0)\}$. 
The group of $\S_1$ is infinite by Theorem~\ref{thm:infinite}. 
Another 3D model with infinite group is $\S_2=\{(-1, 0, 0), (1, -1, 1), (1, 0, 1), (1, 1, -1)\}$.
However, the group for $\S_2$ is in some sense ``less infinite'', because the group generators
satisfy the equations $(\phi_x\phi_y)^2=(\phi_x\phi_z)^2=1$.
There are apparently no such relations for the group of $\S_1$. 
\end{ex}

The examples above suggest that not all infinite groups are equal. This is different from the situation in 2D, where the only possible
infinite group is the infinite dihedral group. In order to understand which groups arise in 3D, we have made a systematic search for relations
among the group generators. According to our computations, there are only the groups listed in Table~\ref{tab}.
\def\a{\mathtt{a}}\def\b{\mathtt{b}}\def\c{\mathtt{c}}\def\<#1>{\langle#1\rangle}%
\begin{table}
  \begin{tabular}{lr|lr}
    Group & \llap{Number of models} & Group &\llap{Number of models} \\\hline
    $G_1=\<\a,\b,\c\mid\a^2,\b^2,\c^2>$ &\kern-1em 10,759,449 & $G_7=\<\a,\b,\c\mid\a^2,\b^2,\c^2,(\a\b)^4>$ & 82 \\
    $G_2=\<\a,\b,\c\mid\a^2,\b^2,\c^2,(\a\b)^2>$ & 84,241 & $G_8=\<\a,\b,\c\mid\a^2,\b^2,\c^2,(\a\b)^3, (\b\c)^3>$ & 30 \\
    $G_3=\<\a,\b,\c\mid\a^2,\b^2,\c^2,(\a\c)^2, (\a\b)^2>$ & 58,642 & $G_9=\<\a,\b,\c\mid\a^2,\b^2,\c^2,\a\c\b\a\c\b\c\a\b\c>$ & 20 \\
    $G_4=\<\a,\b,\c\mid\a^2,\b^2,\c^2,(\a\c)^2, (\a\b)^3>$ & 1,483 & $G_{10}=\<\a,\b,\c\mid\a^2,\b^2,\c^2, (\a\b)^3, (\c\b\c\a)^2>$ & 8 \\
    $G_5=\<\a,\b,\c\mid\a^2,\b^2,\c^2,(\a\b)^3>$ & 1,426 & $G_{11}=\<\a,\b,\c\mid\a^2,\b^2,\c^2,(\c\a)^3,(\a\b)^4, (\b\a\b\c)^2>$ & 8\\
    $G_6=\<\a,\b,\c\mid\a^2,\b^2,\c^2,(\a\c)^2, (\a\b)^4>$ & 440 & $G_{12}=\<\a,\b,\c\mid\a^2,\b^2,\c^2,(\a\b)^4, (\a\c)^4>$ & 4
  \end{tabular}
  \caption{Groups associated to 3D models}\label{tab}
\end{table}
Often, the group generators $\a,\b,\c$ are just the group generators $\phi_x,\phi_y,\phi_z$, but for some models, we need to
change their order or apply simple substitutions such as $\a=\phi_x,\b=\phi_y,\c=\phi_x\phi_z\phi_x$ in order to match their group to one
of the groups listed in Table~\ref{tab}. We must also remark that the relations listed above only are those that we found, and in principle some of the groups
might have further relations. Our systematic search implies that any further relation would correspond to a word of more than 400 generators,
and we are quite confident that no such relations exist. However, proving the absence of additional relations is not an easy thing to do in
general. We consider the two cases which are, in a sense, closest to the case of finite groups. 

\section{The smallest infinite group}\label{sec:HadamardGroup}

We consider the models whose group is isomorphic to~$G_3$. The defining relations of this
group can be read as rewrite rules $\a^2\to\epsilon$, $\b^2\to\epsilon$, $\c^2\to\epsilon$, $\a\c\to\c\a$, $\a\b\to\b\a$. With this rewriting
system, every group element can be written (uniquely) in a form that matches the regular expression $[\c](\b\c)^\ast[\a]$.
If for any of the groups associated to the 58,642 models had an additional relation, we could also write it in this form.
Any such relation however would turn the group into a finite group. Since we know from Theorem~\ref{thm:infinite} that the groups
are infinite, we can exclude the existence of additional equations in this particular case.

Hadamard models were introduced in \cite{BostanBousquetKauersMelczer2015}. They are interesting because their generating function can
be expressed as Hadamard product of the generating functions of two lower dimensional models, and this makes it easier to recognize whether such a model is D-finite. Recall from~\cite{BostanBousquetKauersMelczer2015} that a model is called $(1,2)$-Hadamard if (possibly after a
permutation of variables) its stepset polynomial $P_{\S}$ can be written as
\[
  P_{\S}=U(x)+V(x)T(y,z),
\]
for some $U,V\in\set Q[x,x^{-1}]$ and some $T\in\set Q[y,y^{-1},z,z^{-1}]$. It is called $(2,1)$-Hadamard if (possibly after a permutation
of variables) we have
\[
P_{\S}=U(y,z)+V(y,z)T(x)
\]
for some $U,V\in\set Q[y,y^{-1},z,z^{-1}]$ and some $T\in\set Q[x,x^{-1}]$. 

In the remainder of this section, we establish a connection between the group $G_3$ and Hadamard walks.

\begin{lem}\label{lm:derivative}
  If $f(x,z), g(y,z) \in \mathbb{Q}(x,y,z)$ are such that $f(x,z)=f(\frac{1}{x}g(y,z),z)$, then
  $\frac{\partial}{\partial{x}}f(x,z)=0$ or $\frac{\partial}{\partial{y}}g(y,z)=0$.
\end{lem}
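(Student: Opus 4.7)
The plan is to differentiate the functional equation in the variable $y$ and then exploit algebraic independence to isolate the two claimed possibilities.

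First, I would apply $\partial/\partial y$ to both sides of $f(x,z) = f(\tfrac{1}{x}g(y,z),\, z)$. Since $f$ is a rational function of $x$ and $z$ only, the left-hand side is $0$; on the right-hand side the chain rule produces a product of three factors, namely $f_1\!\left(\tfrac{1}{x}g(y,z),\, z\right)$ (where $f_1$ denotes the derivative of $f$ with respect to its first argument), the factor $1/x$, and $\partial g/\partial y$. Working in the field $\set Q(x,y,z)$, this product can equal $0$ only if one of the factors is zero. The factor $1/x$ is nonzero, so either $\partial g/\partial y = 0$ --- in which case we are done --- or $f_1\!\left(\tfrac{1}{x}g(y,z),\, z\right) = 0$, which is the case I would need to analyze further.

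In the latter case, I would conclude that $\partial f/\partial x$ is identically zero by a transcendence argument. Setting aside the trivial subcase $g \equiv 0$ (where $\partial g/\partial y$ vanishes automatically), the rational function $g(y,z)/x$ lies in $\set Q(y,z)(x) \setminus \set Q(y,z)$ and is therefore transcendental over $\set Q(y,z)$, and a fortiori over $\set Q(z)$. Hence $g(y,z)/x$ and $z$ are algebraically independent over~$\set Q$, so the assignment $u \mapsto g(y,z)/x$, $v \mapsto z$ extends to an injective $\set Q$-algebra embedding $\set Q(u,v) \hookrightarrow \set Q(x,y,z)$. The assumption $f_1(g(y,z)/x,\, z) = 0$ says precisely that $f_1(u,v)$ is mapped to zero under this embedding, so $f_1(u,v)$ already vanishes in $\set Q(u,v)$, giving $\partial f/\partial x = 0$ as claimed.

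I do not expect any serious obstacle: the proof reduces to the chain rule together with a single transcendence observation. The only point requiring care is that all manipulations are carried out in the integral domain $\set Q(x,y,z)$, so that a vanishing product really does force one of the factors to vanish; this is also what legitimizes the concluding embedding argument, since the transcendence of $g(y,z)/x$ over $\set Q(z)$ is exactly what is needed for the substitution homomorphism $\set Q(u,v) \to \set Q(x,y,z)$ to be injective.
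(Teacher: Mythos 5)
Your proof is correct and follows essentially the same route as the paper's: differentiate the identity with respect to $y$, apply the chain rule, and conclude from the fact that $\mathbb{Q}(x,y,z)$ is an integral domain. The only difference is that you explicitly justify (via the transcendence of $g(y,z)/x$ over $\mathbb{Q}(z)$, after disposing of the trivial case $g\equiv 0$) why a nonzero $D_1f$ stays nonzero under the substitution $x\mapsto\frac{1}{x}g(y,z)$ --- a step the paper's proof treats as immediate.
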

\begin{proof}
If $\frac{\partial}{\partial{x}}f(x,z)\neq0$, then
\[
0=\frac{\partial}{\partial{y}}f(x,z)
 =\frac{\partial}{\partial{y}}f\Big(\frac{1}{x}g(y,z),z\Big)
=(D_1 f)\Big(\frac{1}{x}g(y,z),z\Big)\frac{1}{x}\frac{\partial}{\partial{y}}g(y,z).
\]
Since $\frac{\partial}{\partial{x}}f(x,z)\not=0$, it follows $\frac{\partial}{\partial{y}}g(y,z)=0$, as required.
\end{proof}

\begin{thm}\label{th:HadamardGroup}
  Let $\S$ be a stepset which has an associated group.
  Then $\S$ is Hadamard if and only if $(\phi_x\phi_y)^2=(\phi_x\phi_z)^2=1$
  (possibly after a permutation of the variables $x,y,z$).
\end{thm}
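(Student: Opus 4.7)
For the forward direction ($\Rightarrow$) I would compute directly; after a permutation we may assume the distinguished variable is $x$. If $P_{\S}=U(x)+V(x)T(y,z)$ is $(1,2)$-Hadamard, the extremal $y$-coefficients factor as $B_{\pm}(x,z)=V(x)\widetilde T_{\pm}(z)$, so $B_-/B_+$ depends only on $z$ and, symmetrically, $C_-/C_+$ depends only on $y$. Substituting these into the definitions of $\phi_x,\phi_y,\phi_z$, the compositions $(\phi_x\phi_y)^2$ and $(\phi_x\phi_z)^2$ evaluate to the identity by a short bookkeeping check. The $(2,1)$-Hadamard case is even easier: $A_-/A_+=t_{-1}/t_{+1}$ is then a constant, $\phi_x\colon x\mapsto c/x$ is independent of $y,z$, and commutes trivially with $\phi_y$ and $\phi_z$.

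For the converse ($\Leftarrow$), assume (after relabeling) that $(\phi_x\phi_y)^2=(\phi_x\phi_z)^2=1$. Expanding $(\phi_x\phi_y)^2(x,y,z)=(x,y,z)$ and setting $g(y,z)=A_-/A_+$, $f(x,z)=B_-/B_+$ reduces the identity to $g(y,z)=g\bigl(f(x,z)/y,z\bigr)$, which matches the hypothesis of Lemma~\ref{lm:derivative} after relabeling. The lemma then forces $A_-/A_+$ independent of $y$, or $B_-/B_+$ independent of $x$; the analogous analysis on $(\phi_x\phi_z)^2=1$ gives $A_-/A_+$ independent of $z$, or $C_-/C_+$ independent of $x$, producing four combined sub-cases. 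Three collapse quickly: if $A_-/A_+$ is independent of both $y$ and $z$ it equals a constant $\alpha$, and $P_{\S}=A_0(y,z)+A_+(y,z)(x+\alpha/x)$ is manifestly $(2,1)$-Hadamard with $x$ distinguished; in the mixed sub-cases, say $A_-/A_+=\tau(z)$ and $C_-/C_+$ independent of $x$, the \emph{full} content of $(\phi_x\phi_z)^2=1$ (beyond what Lemma~\ref{lm:derivative} extracts) gives the invariance $g(y,z)=g\bigl(y,s(y)/z\bigr)$ with $s(y)=C_-/C_+$, hence $\tau(z)=\tau\bigl(s(y)/z\bigr)$ for all $y$, and if $s$ were non-constant it would take infinitely many values as $y$ varies, forcing the non-constant rational $\tau$ to be invariant under infinitely many involutions $z\mapsto c/z$ -- impossible, so either $\tau$ is constant (returning to the first sub-case) or $s$ is constant (giving $(2,1)$-Hadamard with $z$ distinguished).

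The remaining case -- $B_-/B_+$ and $C_-/C_+$ both independent of $x$, with $A_-/A_+$ depending non-trivially on both $y$ and $z$ -- is the hard one, and the target there is $(1,2)$-Hadamard with $x$ distinguished. The ingredients are $B_-=r(z)B_+$, $C_-=s(y)C_+$, and the two invariances $g(y,z)=g\bigl(r(z)/y,z\bigr)=g\bigl(y,s(y)/z\bigr)$ coming from the full content of the two group relations. I would decompose $P_{\S}=\sum_i x^i S_i(y,z)$ and combine these layer-wise constraints with the support restriction of each $S_i$ to $\{-1,0,1\}^2$, aiming to show that for every $(j,k)\neq(0,0)$ the triple $\bigl([y^jz^k]S_i\bigr)_{i=-1,0,1}$ is proportional to a fixed vector $(v_{-1},v_0,v_1)$, which is exactly the $(1,2)$-Hadamard condition. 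Converting the rational-function invariances into this rigid combinatorial proportionality is the step I expect to be the main obstacle, since the invariances alone are not enough to pin down the layer vectors; I would proceed by a case split on the finitely many shapes that $r(z)$ and $s(y)$ can take compatibly with the $\{-1,0,1\}$-support, handling each resulting family of matrix patterns for the $S_i$ separately.
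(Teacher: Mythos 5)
Your forward direction and the first three sub-cases of your converse are sound and coincide in substance with the paper's proof: the paper also disposes of the situation where one of the three ratios is constant by exhibiting a $(2,1)$-Hadamard form, and its proof-by-contradiction of the claim $\frac{\partial}{\partial x}(B_-/B_+)=\frac{\partial}{\partial x}(C_-/C_+)=0$ is exactly your mixed-sub-case argument (an identity of the shape $\tau(z)=\tau\bigl(s(y)/z\bigr)$ forcing $\tau$ or $s$ to be constant). The genuine gap is in your last case, which is the main one -- it is the case that actually covers the $58{,}642$ models with group $G_3$. There you correctly identify the target ($(1,2)$-Hadamard with $x$ distinguished) but then only announce a brute-force enumeration over support patterns, flag it yourself as the main obstacle, and do not carry it out. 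Moreover your justification for resorting to enumeration -- that ``the invariances alone are not enough to pin down the layer vectors'' -- is not right: the two independence statements, together with non-constancy, already suffice.

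The missing idea is to use the $y$-decomposition and the $z$-decomposition of $P_{\S}$ simultaneously. Since $B_-/B_+$ and $C_-/C_+$ are independent of $x$, one writes $B_\mp=v_1(x)\,b_\mp(z)$ and $C_\mp=v_2(x)\,c_\mp(y)$, so that
\[
P_{\S}=B_0(x,z)+v_1(x)\bigl(b_-(z)y^{-1}+b_+(z)y\bigr)=C_0(x,y)+v_2(x)\bigl(c_-(y)z^{-1}+c_+(y)z\bigr).
\]
Non-constancy of $B_-/B_+$ guarantees that $P_{\S}$ contains a monomial involving both $y$ and $z$; such a monomial lies in the $v_1$-part of the first representation and in the $v_2$-part of the second, and comparing its $x$-content forces $v_1=v_2=v$ up to a scalar. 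Every monomial of $P_{\S}$ involving $y$ comes from $v_1(x)(\cdots)$ and every monomial involving $z$ comes from $v_2(x)(\cdots)$, so all of them have the form $v(x)t(y,z)$ and $P_{\S}=u(x)+v(x)t(y,z)$. No case split on support patterns is needed. (One caveat you could legitimately raise against this argument: independence of $B_-/B_+$ from $x$ only gives $B_\mp=w(x,z)b_\mp(z)$ with $b_-,b_+$ coprime, and the claim that the common factor $w$ can be taken free of $z$ deserves a word of justification given the $\{-1,0,1\}$ support constraints; the paper asserts the factored form without comment.)
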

\begin{proof}
  Suppose $\S$ is Hadamard. Then it is easy to check by a direct calculation that we have
  $\phi_x\phi_y=\phi_y\phi_x$ and $\phi_x\phi_z=\phi_z\phi_x$.

For the converse, suppose that $\phi_x\phi_y=\phi_y\phi_x$ and $\phi_x\phi_z=\phi_z\phi_x$.
Then we have 
\begin{align}
\dfrac{A_-(y,z)}{A_+(y,z)}
=\dfrac{A_-\big(\frac{1}{y}\frac{B_-(x,z)}{B_+(x,z)},z\big)}
{A_+\big(\frac{1}{y}\frac{B_-(x,z)}{B_+(x,z)},z\big)}
\qquad\text{and}\qquad
\dfrac{B_-(x,z)}{B_+(x,z)}
=\dfrac{B_-\big(\frac{1}{x}\frac{A_-(y,z)}{A_+(y,z)},z\big)}
       {B_+\big(\frac{1}{x}\frac{A_-(y,z)}{A_+(y,z)},z\big)}\label{eq:1}
\end{align}
and
\begin{align}
\dfrac{A_-(y,z)}{A_+(y,z)}
=\dfrac{A_-\big(y,\frac{1}{z}\frac{C_-(x,y)}{C_+(x,y)}\big)}
{A_+\big(y,\frac{1}{z}\frac{C_-(x,y)}{C_+(x,y)}\big)}
\qquad\text{and}\qquad
\dfrac{C_-(x,y)}{C_+(x,y)}
=\dfrac{C_-\big(\frac{1}{x}\frac{A_-(y,z)}{A_+(y,z)},y\big)}
       {C_+\big(\frac{1}{x}\frac{A_-(y,z)}{A_+(y,z)},y\big)}\label{eq:3}.
\end{align}
If one of $\frac{A_-}{A_+},\frac{B_-}{B_+}$ and $\frac{C_-}{C_+}$ is constant,
e.g. $\frac{A_-}{A_+}=c\not=0$, then
$P_{\S}(x,y,z)=A_{0}(y,z)+A_{-}(y,z)(x^{-1}+cx)$.
Hence $\S$ is a $(2,1)$-Hadamard model and we are done.
If none of $\frac{A_-}{A_+},\frac{B_-}{B_+},\frac{C_-}{C_+}$ is constant,
we claim that:
\begin{equation}\label{eq:V1V2}
\frac{\partial}{\partial x}\Big(\frac{B_-(x,z)}{B_+(x,z)}\Big)=
  \frac{\partial}{\partial x}\Big(\frac{C_-(x,y)}{C_+(x,y)}\Big)=0.
\end{equation}
We prove this claim by contradiction.
Assume $\frac{\partial}{\partial x}\Big(\frac{B_-(x,z)}{B_+(x,z)}\Big)\not=0$, Equation~\eqref{eq:1} (right) and Lemma~\ref{lm:derivative} imply
\[
 \frac{\partial}{\partial y}\Big(\frac{A_-(y,z)}{A_+(y,z)}\Big)=0 \text{ and }
 \frac{\partial}{\partial z}\Big(\frac{A_-(y,z)}{A_+(y,z)}\Big)\not=0.
\]
Then \eqref{eq:3} (left) and Lemma~\ref{lm:derivative} force $\frac{C_-}{C_+}$ to be a constant, which is a contradiction.
Therefore $\frac{\partial}{\partial x}\Big(\frac{B_-(x,z)}{B_+(x,z)}\Big)=0$.
A similar reasoning using Equations~\eqref{eq:3} (right) and~\eqref{eq:1} (left) leads to
$\frac{\partial}{\partial x}\Big(\frac{C_-(x,y)}{C_+(x,y)}\Big)=0$, which completes the proof of the claim.
At this stage, we can assume
\[
\begin{cases}
 B_-=v_1(x) b_-(z)\\
 B_+=v_1(x) b_+(z)
\end{cases}
\text{ and }
\quad
\begin{cases}
 C_-=v_2(x) c_-(z)\\
 C_+=v_2(x) c_+(z).
\end{cases}
\]
Therefore
\begin{align}
 P_{\S}(x,y,z) & = B_0(x,z)+v_1(x)\big(b_-(z)\frac{1}{y}+b_+(z)y\big)
                = C_0(x,y)+v_2(x)\big(c_-(y)\frac{1}{z}+c_+(y)z\big)\label{eq:v2}.
\end{align}
Since $\frac{B_-}{B_+}$ is not a constant, $P_{\S}$ must contain a monomial $m(x,y,z)$ involving both $y$ and~$z$.
Then from \eqref{eq:v2}, we know $v_1(x)=v_2(x)=v(x)$ and every monomial of $P_{\S}$ involving
$y$ or $z$ has the form $v(x) t(y,z)$. Hence $P_{\S}$ can be rewritten as
$P_{\S}(x,y,z)=u(x)+v(x) t(y,z)$, i.e., $\S$ is $(1,2)$-Hadamard.
\end{proof}

%
For a given Hadamard model $\S$, Theorem~\ref{th:HadamardGroup} implies that any $w\in G(\S)$ can be written as 
$\phi_x(\phi_y\phi_z)^m$, $\phi_x(\phi_z\phi_y)^m$, $(\phi_y\phi_z)^m$ or $(\phi_z\phi_y)^m$. Therefore, $G(\S)\cong\set Z_2\times D$, where $D$ is a dihedral group, $D$ being infinite if and only if $G(\S)$ is infinite.
Bacher et al. \cite{bacher16} found 60,829 three dimensional Hadamard models, among which 2,187 are with finite groups
$\mathbb{Z}_2\times D_4$, $\mathbb{Z}_2\times D_{6}$ and $\mathbb{Z}_2\times D_{8}$. This is consistent with our result.
The other 58,642 models are exactly the ones corresponding to the group $G_3=\Z\times D_\infty$ of Table~\ref{tab}.

\section{The second smallest infinite group}

If $G_3$ is the smallest infinite group in our list, then $G_4$ is the second smallest group. Already in this case, we are no longer able
to exclude the existence of further relations. However, we do have some partial results in this direction.
Among the 1,483 models with the group under consideration, there are 29 singular models. 
For a 3D model $\S$ to be singular means that at least one of the three projections of $\S$ to the plane is a 2D singular model
(this is just one of several possible non-equivalent ways to define what a singular model is in 3D). 
Next we will show the absence of further relations for all the 29 singular models having the (conjectured) group~$G_4$ 
via the valuation argument.

The valuation of a Laurent series $F(t)$ is the smallest $d$ such that $t^d$ occurs in $F(t)$ with a non-zero coefficient, denoted by $\text{val}(F)$.
Let $t$ be an indeterminate and $x,y,z$ be Laurent series in~$t$, with coefficients in $\mathbb{Q}$, 
of valuations $u,v$ and $w$ respectively.
Then we can define three new transformations according to the valuation
\begin{alignat*}1
\Phi_x(u,v,w)&=(\text{val}\Big(\frac{A_-}{A_+}\Big)-u,v,w),\\
\Phi_y(u,v,w)&=(u,\text{val}\Big(\frac{B_-}{B_+}\Big)-v,w),\text{ and }\\
\Phi_z(u,v,w)&=(u,v,\text{val}\Big(\frac{C_-}{C_+}\Big)-w).
\end{alignat*}
Suppose $G_{u,v,w}(\S)$ is the group generated by $\Phi_x,\Phi_y$ and $\Phi_z$ under composition.
If $G_{u,v,w}(\S)$ does not have any further relations besides those expected from~$G_4$,
then $G(\S)\cong G_4$.

Using a suitable rewriting system, we can show that all elements of $G_4$ can be brought to a form that matches the regular
expression $[[\a]\b]([\a]\c\b)^*[\a][\c]$. 
Thus every element in $G_{u,v,w}(\S)$ can be written to match 
\begin{equation}\label{eq:valuation not 1}
[[\Phi_x]\Phi_y]\big([\Phi_x]\Phi_z \Phi_y\big)^*[\Phi_x][\Phi_z].
\end{equation}
Next, we will show there exists no further relation in~$G_{u,v,w}(\S)$. 
The idea is to find $(u,v,w)\in\mathbb{Z}^3$ with specific properties such that
\[
 \Phi(u,v,w)\not=(u,v,w),
\]
for any $\Phi\in G_{u,v,w}(\S)$.
The reasoning is best explained with an example.

\begin{ex}
  Consider the singular model $\S=\{(-1,-1,1),(0,1,-1),(1,0,1)\}$. 
Suppose $u,v,w$ are the valuations of $x,y,z$ respectively with $w > v > -u > 0$.
Then
\begin{alignat*}3
 \Phi_x\Phi_z\Phi_y(u,v,w)&=(v - 2 w,{}&&{-}u - v + 2 w, -u - 2 v + 3 w) \text{ and }\\
 \Phi_z\Phi_y(u,v,w)&=(u,&&{-}u- v + 2 w, -u - 2 v + 3 w).
\end{alignat*}
As $w > v > -u > 0$, it is easy to check that $-u - 2 v + 3 w> -u - v + 2 w > -(v - 2 w )>0$ and that
$-u - 2 v + 3 w > w, -u - v + 2 w > v, v - 2 w < u$.
Then by similar discussions for $(u, -u - v + 2 w, -u - 2 v + 3 w)$, we find for any 
$\Phi'\in G_{u,v,w}(\S)$ which matches regular expression $\big([\Phi_x]\Phi_z \Phi_y\big)^*$
\begin{equation}\label{eq:original condition holds}
 \Phi'(u,v,w)=(u',v',w'),
\end{equation}
 where $w'>w,v'>v,u'\leq u$ with $w' > v' > -u' > 0$.

If there exist further relations in $G_{u,v,w}(\S)$, then Equation \eqref{eq:original condition holds} and 
\eqref{eq:valuation not 1}
together with the fact that $\Phi_x,\Phi_y,\Phi_z$ are involutions force the existence of $\Phi\in G_{u,v,w}(\S)$ 
such that $\Phi$ matches $\Phi_x\Phi_z[\Phi_x]\Phi_y\big([\Phi_x]\Phi_z \Phi_y\big)^*$ and that
$
\Phi(u,v,w)=(u,v,w),
$
which is impossible since 
\[
\Phi_y(u',v',w')=(u', -u' - v' + 2 w', w')\text{ with }-u' - v' + 2 w' > v'.
\]
At this stage, we have shown that there is no other relation in $G_{u,v,w}(\S)$.
Therefore, the group associated to $\S$ is really~$G_4$.
\end{ex}
         
The above method applies to all 29 singular models, although the conditions for the valuations differ slightly from model to model. 

\section{Conclusion}

We have noted that not all infinite groups associated to octant models are
equal. Instead, assuming the absence of some unreasonably long relations among
the group generators not implied by shorter relations, we can identify twelve
different infinite groups. Some of them are quite frequent while others are
quite rare. We have seen that one of the groups signals that a model has the
Hadamard property of~\cite{BostanBousquetKauersMelczer2015}. This raises the
question whether also the other groups indicate some useful combinatorial
property of the stepset. So far, we have not found any such connection.

Another important question is whether some of the octant models with an infinite
group have nevertheless a D-finite generating function.  In view of the
seemingly non-D-finite generating functions of certain models with finite
group~\cite{BostanBousquetKauersMelczer2015,bacher16}, we must take this
possibility into account. Testing all the $10^7$ models one by one does not seem
computationally feasible, but maybe a reasonable starting point for such a
search will be the models that have an infinite group other than~$G_1$. We have
performed a search for recurrence relations for these models but did not find
any D-finite models so far. We may have been using too little data.


\section*{Appendix}

We list below the models corresponding to the rare groups $G_8,\dots,G_{12}$.
The models corresponding to the other infinite groups can be obtained from the authors. 
Each model is depicted by three arrangements of dots. For example, the diagram \ \Stepset01111000000000000000001010 \
represents the stepset
\[\{(-1,0,-1),(0,0,-1),(0,-1,-1),(1,-1,-1),(0,1,1),(1,0,1)\}.\]
\begin{center}
\begin{tabular}{|c|c|c|c|c|c|c|c|}\hline
\Stepset01001000000000100000001000 
&\Stepset00000100000001100010000000 
&\Stepset00100000000000110000010000 
&\Stepset00001000000101000000000100 
&\Stepset01000000000001100000001000 
&\Stepset01000000000000110000001000 
&\Stepset10000000000001011000010000 
&\Stepset01001000001010000000000001 
\\\hline
 \Stepset01001000001001000000000100 
&\Stepset01001000001000100000001000 
&\Stepset00001100001001000000000100 
&\Stepset00001100000001100010000000 
&\Stepset01000000000010110000001000 
&\Stepset01000000000001100000011000 
&\Stepset00100000000010110000010000 
&\Stepset00100000000010010000010010 
 \\\hline
 \Stepset11011000011001011000011000 
&\Stepset11001100001011011000010001 
&\Stepset01111000001011110000010100 
&\Stepset01101100001110110000110000 
&\Stepset01101100000111110000010010 
&\Stepset01011000001011011000001011 
&\Stepset01001100001011110000100110 
&\Stepset00101101001011110000010100 
 \\\hline
&\Stepset00101100001011011000010110 
&\Stepset00001100001101110000110110 
&\Stepset00000101100101110010110000 
&\Stepset00101100000101110000010110 
&\Stepset01011000000101110000001011 
 &\Stepset01111000000101110000001010 
 &
 \\\hline 
\end{tabular}

Models with group $G_8=\<\a,\b,\c\mid\a^2,\b^2,\c^2,(\a\b)^3, (\b\c)^3>$

\bigskip

\begin{tabular}{|c|c|c|c|c|c|c|c|}\hline
 \Stepset01010000000100010000010000 
&\Stepset01000100000010001000010000 
&\Stepset00001000001000100000001010 
&\Stepset00001000000100010010100000 
&\Stepset01001000001011100000001000 
&\Stepset01001000000010110000011000 
&\Stepset01001000000001011000110000 
&\Stepset00001100001001110010000000 
 \\\hline
 \Stepset11001100000011001000010001 
&\Stepset01111000000011100000010100 
&\Stepset00101101001000110000010100 
&\Stepset00101000000111000000011110 
&\Stepset11011000001101000000011011 
&\Stepset01111000001110110000000110 
&\Stepset01101100000001011000110110 
&\Stepset00101101000111110000100100 
 \\\hline
&&
 \Stepset01011000001111010000010010 
&\Stepset01001000001011110000011010 
&\Stepset00001101001111010000110000 
 &\Stepset01001100001011011000110000 
&&
 \\\hline 
\end{tabular}

Models with group $G_9=\<\a,\b,\c\mid\a^2,\b^2,\c^2,\a\c\b\a\c\b\c\a\b\c>$

\bigskip
\begin{tabular}{|c|c|c|c|c|c|c|c|}\hline
 \Stepset11000000000001011000010000 
&\Stepset01100000000010110000010000 
&\Stepset01001000001010001000000001 
&\Stepset01001000001001100000000100 
&\Stepset00100100000010110000010000 
&\Stepset00100100000010010000010010 
&\Stepset00001100001001100000000100 
&\Stepset00001100001001000000000110 
 \\\hline
\end{tabular}

Models with group $G_{10}=\<\a,\b,\c\mid\a^2,\b^2,\c^2, (\a\b)^3, (\c\b\c\a)^2>$
\bigskip

\begin{tabular}{|c|c|c|c|c|c|c|c|}\hline
 \Stepset11000000000000011000010000 
&\Stepset01100000000000110000010000 
&\Stepset01001000000010000000001001 
&\Stepset01001000000001000000100100 
&\Stepset00001100001000000000000110 
&\Stepset00100100000010100000010000 
&\Stepset00100100000010000000010010 
&\Stepset00001000000101000000100100 
  \\\hline
\end{tabular}

Models with group $G_{11}=\<\a,\b,\c\mid\a^2,\b^2,\c^2,(\c\a)^3,(\a\b)^4, (\b\a\b\c)^2>$
\bigskip

\begin{tabular}{|c|c|c|c|}\hline
\Stepset01100100000000000000110010 
&\Stepset01011000000000000000001011 
&\Stepset00101100000000000000010110 
  &\Stepset00000101100000000010110000 
\\\hline
\end{tabular}

Models with group $G_{12}=\<\a,\b,\c\mid\a^2,\b^2,\c^2,(\a\b)^4, (\a\c)^4>$

\end{center}

\end{document}